\documentclass[a4paper]{amsart}
\usepackage{amsmath,amssymb,amsthm}

\newcommand{\C}{{\mathbb C}}

\newcommand{\R}{{\mathbb R}}

\newcommand{\abs}[2][\empty]{\ifx#1\empty\left|#2\right|%
\else#1\vert #2 #1\vert\fi}
\newcommand{\Cnt}[1][]{{\mathcal C}^{#1}}
\def\conv{\star}
\newcommand{\eps}{\varepsilon}
\def\restr#1#2{{#1}_{|#2}}
\newcommand{\supp}{\mathop{\mathrm{supp}}}
\newcommand{\test}{\mathcal D}

\newcommand{\ster}[1]{{{}^* \mskip-1mu #1}}
\newcommand{\st}{\mathop\mathrm{st}}
\def\Fin{\mathrm{Fin}}

\newtheorem{thm}{Theorem}[section]
\newtheorem{lemma}[thm]{Lemma}

\theoremstyle{definition}
\newtheorem*{df}{Definition}

\begin{document}
\title{Characterization of distributions having a value at a point in the sense of Robinson}
\author{Hans Vernaeve}
\address{Ghent University, Krijgslaan 281\\
B-9000 Gent, Belgium.}
\email{hvernaev@cage.ugent.be}
\author{Jasson Vindas}
\thanks{J.~Vindas gratefully acknowledges support by a Postdoctoral Fellowship of the Research
Foundation--Flanders (FWO, Belgium)}
\address{Ghent University, Krijgslaan 281\\
B-9000 Gent, Belgium.}
\email{jvindas@cage.ugent.be}

\begin{abstract}
We characterize Schwartz distributions having a value at a single point in the sense introduced by means of nonstandard analysis by A.\ Robinson. They appear to be distributions continuous in a neighborhood of the point.
\end{abstract}

\subjclass[2000]{26E35, 46F10, 46S20}
\keywords{Schwartz distributions, nonstandard analysis, point values}
\maketitle

\section{Introduction}
In \cite[\S 5.3]{Rob}, A.\ Robinson initiated the use of nonstandard analysis in the theory of Schwartz distributions. Among other things, he introduces nonstandard representatives of a Schwartz distribution and, by means of a property of the representatives, he introduces a notion of point value of a distribution.

A natural question to raise is how Robinson's notion of point value is related to the classical definition of point value in the sense of \L ojasiewicz \cite{Loj,Loj2}. Through investigation of this question, we arrived at a characterization which is the main result of this paper: a distribution has a value at $x_0\in\Omega$ in the sense of Robinson iff it is a continuous function in a neighborhood of $x_0$. Our characterization improves an earlier result of P. Loeb \cite{Loeb} which has to assume the everywhere existence of Robinson point values (cf. section \ref{Known results}).

We mention that the concept of point value in the sense of \L ojasiewicz has shown to be of great importance
in several areas of mathematical analysis and its applications, such as spectral expansions \cite{EstKan,Walter1,vind-est}, sampling theorems and summability of cardinal series \cite{Walter2}, edge detection from spectral data \cite{est-vind}, or the convergence of wavelet expansions \cite{Pilipovic-Teofanov,Walter3}.

\section{Notations}
By $\Omega$, we always denote an open subset of $\R^n$. We denote $B(a,r):=\{x\in \R^n: \abs{x-a}<r\}$.

\subsection{Schwartz distributions}
We denote by $\test(\Omega)$ the space of functions in $\Cnt[\infty](\Omega)$ with compact support contained in $\Omega$. Its dual space $\test'(\Omega)$ is the space of Schwartz distributions on $\Omega$. We denote the action of a linear map $T$: $\test(\Omega)\to\C$ on an element $\phi\in\test(\Omega)$ by means of the pairing $\left<T,\phi\right>$. Sometimes it is useful to denote functions and distributions by means of its action on a dummy variable (e.g., as in \cite{EstKan}); we then denote the action of $T$ on $\phi$ as $\left<T(x), \phi(x)\right>$. This allows us to write changes of variables $y=F(x)$ simply as $\left<T(F(x)), \phi(x)\right> = \left<T(y), \phi(F^{-1}(y)) \cdot \abs{D F^{-1}(y)}\right>$. We refer to \cite{EstKan,Schw} for further information about Schwartz distributions.

\subsection{Nonstandard analysis}
We work in the framework of nonstandard analysis as introduced by Robinson \cite{Rob}. We refer to \cite{Gol} for a more accessible introduction to nonstandard analysis. As usual, if $f$: $\R^n\to\R^m$ is a function, we keep the notation $f$ for its canonical extension $\ster f$ (and similarly for relations on $\R^n$). Also $\ster\int$ will be denoted by $\int$. We denote the set of all finite numbers in $\ster\C$ by $\Fin(\ster\C)$ and write $x\approx y$ if $\abs{x-y}$ is infinitesimal ($x,y\in\ster \R^n$). We write $x\lessapprox y$ if $x\le y$ or $x\approx y$ ($x,y\in\ster\R$). We denote the standard part (a.k.a.\ shadow) by $\st$.

\section{Known results}\label{Known results}
Robinson works with real valued distributions on the real line, but the generalization to complex valued distributions on an open subset $\Omega$ of $\R^n$ is in most cases straightforward.
We say that a function $f\in\ster{\Cnt[\infty]}(\Omega)$ represents (in the sense of Robinson) a (not necessarily continuous) linear map $T$: $\test(\Omega)\to\C$ if $\int_\Omega f\phi\approx \langle T,\phi \rangle$ for each $\phi\in\test(\Omega)$. In fact, more general functions than $\ster{\Cnt[\infty]}(\Omega)$-functions are allowed as representatives, but $\ster{\Cnt[\infty]}(\Omega)$-functions suffice to develop distribution theory by infinitesimal means. Robinson calls equivalence classes of functions representing the same map $T$ predistributions. We will identify the predistribution with the map $T$.

Robinson calls a predistribution standard at $x_0\in\Omega$ if it has a representative $f$ that is S-continuous at $x_0$, i.e., such that $f(x)\approx f(x_0)$ for each $x\approx x_0$ \cite[p.\ 140]{Rob}.
He shows (with a slightly different proof):
\begin{thm}[Robinson]\label{pointvals}
Let $T$ be a linear map $\test(\Omega)\to\C$. If $T$ has a representative $f$ that is S-continuous at $x_0\in\Omega$, then $f(x_0)\in\Fin(\ster\C)$.
Moreover, the value $\st f(x_0)$ does not depend on the chosen S-continuous representative.
\end{thm}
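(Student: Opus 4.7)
The plan is to bridge the gap between the local (infinitesimal) control provided by $S$-continuity and the global testing allowed by standard test functions using an overflow argument, and then to probe $f(x_0)$ with a standard mollifier.

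\textbf{Step 1: From $S$-continuity to standard neighborhoods.} For each standard $m\in\N$ the internal set
\[
A_m := \{\delta>0 : \forall x\in \ster\Omega,\ \abs{x-x_0}<\delta \Rightarrow \abs{f(x)-f(x_0)}\le 1/m\}
\]
is internal (since $f$ and $f(x_0)$ are internal), and it contains every positive infinitesimal by the assumed $S$-continuity at $x_0$. By overflow, $A_m$ contains a standard positive real $\delta_m$. I would arrange $\delta_m$ decreasing to $0$ with $\overline{B(x_0,\delta_1)}\subset\Omega$.

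\textbf{Step 2: Probing $f(x_0)$ with a standard mollifier.} Fix a standard $\psi\in\test(\R^n)$ with $\psi\ge 0$, $\supp\psi\subset B(0,1)$ and $\int\psi=1$, and set $\psi_\eps(x):=\eps^{-n}\psi((x-x_0)/\eps)$ for standard $\eps>0$. For standard $\eps\le \delta_m$ we have $\psi_\eps\in\test(\Omega)$ and
\[
\Bigl|\int f\psi_\eps - f(x_0)\Bigr| = \Bigl|\int (f(x)-f(x_0))\psi_\eps(x)\,dx\Bigr| \le 1/m,
\]
because the integrand is supported in $B(x_0,\eps)\subset B(x_0,\delta_m)$. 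Taking $m=1$ and any fixed standard $\eps\le\delta_1$, this yields $f(x_0)=\int f\psi_\eps + \xi$ with $\abs{\xi}\le 1$; since $\int f\psi_\eps\approx\langle T,\psi_\eps\rangle\in\C$ is finite, so is $f(x_0)$. This proves $f(x_0)\in\Fin(\ster\C)$.

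\textbf{Step 3: Independence of representative.} If $f_1,f_2$ are two $S$-continuous representatives at $x_0$, Step 1 gives, for each standard $m$, a common standard $\delta_m>0$ with $\abs{f_i(x)-f_i(x_0)}\le 1/m$ on $B(x_0,\delta_m)$ for $i=1,2$. Writing $f_i(x_0)-\int f_i\psi_\eps =: -\xi_{i,\eps}$ with $\abs{\xi_{i,\eps}}\le 1/m$ (for standard $\eps\le\delta_m$), and using that both $\int f_i\psi_\eps\approx\langle T,\psi_\eps\rangle$ so that $\int f_1\psi_\eps - \int f_2\psi_\eps\approx 0$, I decompose
\[
f_1(x_0)-f_2(x_0) = \xi_{2,\eps}-\xi_{1,\eps} + \Bigl(\int f_1\psi_\eps -\int f_2\psi_\eps\Bigr).
\]
Taking standard parts gives $\bigl|\st f_1(x_0)-\st f_2(x_0)\bigr|\le 2/m$, and letting $m\to\infty$ yields equality.

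\textbf{Anticipated obstacle.} The only delicate point is that the representation property $\int f\phi\approx\langle T,\phi\rangle$ is only assumed for standard $\phi$, while $S$-continuity is an external condition. The overflow step (Step 1) is exactly what is needed to transfer the infinitesimal closeness into control over a standard-sized ball, so that a standard mollifier suffices and no internal/non-standard $\phi$ is required.
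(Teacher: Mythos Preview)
Your proof is correct and follows essentially the same route as the paper's: both use overspill (which you call overflow) to upgrade $S$-continuity at $x_0$ to uniform closeness of $f$ to $f(x_0)$ on a standard ball, and then test against a standard bump function of integral $1$ supported in that ball to compare $f(x_0)$ with the finite standard number $\langle T,\phi\rangle$. The only cosmetic differences are that the paper allows a signed $\phi$ and carries the constant $C=\int_\Omega|\phi|$, whereas your choice $\psi\ge 0$ makes $C=1$, and you index the tolerance by $1/m$ rather than a continuous $\eps$.
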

\begin{proof}
Let $\eps\in\R$, $\eps>0$. Since $f$ is internal and S-continuous, we find by overspill (see e.g.\ \cite[11.9.1]{Gol}) on the set
\[\{r\in\ster\R, r >0: (\forall x\in\ster\Omega)\abs{x-x_0}\le r \implies \abs{f(x)-f(x_0)}\le \eps\}\]
that there exists $r\in\R$, $r>0$ such that $\abs{f(x)-f(x_0)}\le \eps$ for each $x\in\ster B(x_0,r)\subseteq \ster\Omega$.
Now let $\phi\in\test(B(x_0,r))$ with $\int_{\Omega}\phi=1$. Let $C:=\int_{\Omega}\abs\phi\in\R$. Then, since $f$ represents $T$,
\begin{align*}
\abs{\left<T,\phi\right>-f(x_0)}\approx \abs{\int_{\ster\Omega} f(x)\phi(x)\,dx-f(x_0)}
&=\abs{\int_{\ster B(x_0,r)}(f(x)-f(x_0))\phi(x)\,dx}\\
&\le\int_{\ster B(x_0,r)}\eps\abs{\phi(x)}\,dx=C\eps.
\end{align*}
In particular, $f(x_0)\in\Fin(\ster\C)$. For any $g$ representing $T$ and S-continuous at $x_0$, we have the same
inequality (possibly only for some smaller $r\in\R$, $r>0$), so $\abs{f(x_0)-g(x_0)}\lessapprox 2C\eps$.
As $\eps$ is arbitrary, $\st f(x_0)=\st g(x_0)$.
\end{proof}
The number $\st f(x_0)$ is called the value (in the sense of Robinson) of the predistribution at $x_0$. P.\ Loeb \cite{Loeb} proves that if $T$ admits a value $g(x)$ at each $x\in\Omega$, then the resulting map $g$: $\Omega\to\C$ is continuous. In that case, $\ster g$ represents $T$ \cite[5.3.15]{Rob}, and hence $T$ is a continuous function (as a regular Schwartz distribution). Actually, Loeb's result is a particular case of theorem \ref{main}, shown below. 

A distribution $T\in\test'(\Omega)$ admits the value $c\in\C$ at $x_0\in \Omega$ in the sense of \L ojasiewicz \cite{Loj,Loj2} if $\lim_{\eps\to 0}T(x_0+\eps x) = c$, where the limit is interpreted in the distributional sense, i.e., if
\[\lim_{\eps\to 0}\left<T(x_0+\eps x),\phi(x)\right> = \left<c,\phi\right>=c\int_{\Omega}\phi, \quad \forall \phi\in\test(\Omega).\]

Observe that if $T$ is continuous in a neighborhood of $x_0$, then one readily verifies that $T$ has a \L ojasiewicz value at $x_{0}$ and its value is in fact $T(x_{0})=c$. On the other hand, the converse is not true, in general, as shown by the function $T(x)=\left|x\right|^{-\frac{1}{2}}e^{i/x}$, which is unbounded at the origin but it admits the \L ojasiewicz value 0 at $x_{0}=0$. More generally, any function of the form $\left|x\right|^{-\beta}e^{i/\left|x\right|^{\alpha}}$, where $\alpha,\beta>0$, uniquely determines a distribution that has \L ojasiewicz value 0 at the origin \cite{Loj}. 

Of crucial importance for our result is the following theorem \cite[\S6.2]{Loj2}:
\begin{thm}\label{Lojasiewicz}
Let $T\in\test'(\Omega)$ and $x_0\in\Omega$. If $\lim_{\eps\to 0, \lambda \to 0} T(x_0+\eps x + \lambda)$ exists in the distributional sense, i.e., $\lim_{\eps\to 0, \lambda \to 0} \left<T(x_0+\eps x + \lambda),\phi(x)\right>$ exists $\forall\phi\in\test(\Omega)$, then $T$ is a continuous function in a neighborhood of $x_0$.
\end{thm}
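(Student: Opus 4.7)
This is \L ojasiewicz's continuity theorem, for which a complete proof can be found in \cite[\S6.2]{Loj2}; I outline the approach I would take.

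\textbf{Plan.} Combine \L ojasiewicz's structure theorem for distributions admitting a point value (applied at $x_0$, via the joint limit specialized to $\lambda=0$) with the extra uniformity in $\lambda$ encoded by the joint-limit hypothesis.

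\textbf{Step 1.} The limit $L(\phi) := \lim_{(\eps,\lambda)\to(0,0)} \langle T(x_0+\eps x+\lambda),\phi(x)\rangle$ defines a linear functional on $\test(\R^n)$ which, by a Banach--Steinhaus argument applied to the net of distributions $T(x_0+\eps x+\lambda)$, is itself a distribution. It is translation-invariant (a shift of $\phi$ is absorbed into $\lambda$) and dilation-invariant (a rescaling of $\phi$ is absorbed into $\eps$), hence of the form $L(\phi)=c\int \phi$ for a unique $c\in\C$. Specializing $\lambda=0$ then gives that $T$ admits the distributional point value $c$ at $x_0$.

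\textbf{Step 2.} By \L ojasiewicz's structure theorem there exist a multi-index $\alpha$ and a continuous function $F$ on a neighborhood $U$ of $x_0$ with $T=\partial^\alpha F$ on $U$ and
\begin{equation*}
F(x) = \tfrac{c}{\alpha!}(x-x_0)^\alpha + R(x-x_0), \qquad R(y)=o(\abs{y}^{|\alpha|}) \text{ as } y\to 0.
\end{equation*}
Integrating by parts $|\alpha|$ times, and using the elementary identity $\int x^\beta\partial^\alpha\phi(x)\,dx = (-1)^{|\alpha|}\alpha!\,\delta_{\beta,\alpha}\int\phi$, the polynomial part of $F$ contributes exactly $c\int\phi$, which reduces the hypothesis to
\begin{equation*}
\eps^{-|\alpha|}\int R(\eps x + \lambda)\,\partial^\alpha\phi(x)\,dx \longrightarrow 0 \quad \text{as } (\eps,\lambda)\to(0,0).
\end{equation*}

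\textbf{Step 3.} Testing the latter along paths of the form $\lambda=\eps^a$ with $0<a<1$ forces $R$ to vanish to arbitrarily high order at the origin. A descent argument on $|\alpha|$ then lets one replace $F$ by a primitive of strictly lower order, continuous and with the same type of asymptotic behavior; iterating brings the order down to $|\alpha|=0$, i.e., $T$ coincides with a continuous function in a neighborhood of $x_0$.

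\textbf{Main obstacle.} The flatness of $R$ obtained in step 3 is ``centered'' at $x_0$, whereas the conclusion requires continuity of $T$ on a full neighborhood; bridging this gap requires a careful interplay between the structure theorem and the uniformity in $\lambda$ which the joint-limit hypothesis (and not merely the existence of a point value at $x_0$) provides. The full argument is carried out in \cite[\S6.2]{Loj2}.
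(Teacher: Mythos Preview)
The paper does not supply its own proof of this theorem: it is stated as a known result and attributed to \cite[\S6.2]{Loj2}, which is precisely the reference you invoke. Your proposal is therefore aligned with the paper's treatment; the outline you add goes beyond what the paper offers, and since you explicitly defer the full argument to \L ojasiewicz's original paper, there is nothing to reconcile.
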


\section{New results}
\begin{lemma}\label{distr}
Let $T$ be a predistribution that is standard at $x_0\in\Omega$. Then $T$ is a Schwartz distribution in an open neighborhood $\omega$ of $x_0$ (i.e., $\restr{T}\omega$ is a continuous map $\test(\omega)\to\C$).
\end{lemma}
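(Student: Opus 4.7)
The plan is to combine S-continuity of a representative $f$ of $T$ at $x_0$ with the overspill principle to produce a standard uniform bound on $f$ on a standard neighborhood of $x_0$, and then read off an order-zero continuity estimate for $T$ on that neighborhood.

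Repeating the overspill step in the proof of Theorem~\ref{pointvals} with $\eps=1$, I first obtain a standard $r>0$ with $\ster B(x_0,r)\subseteq \ster\Omega$ such that $|f(x)-f(x_0)|\le 1$ for every $x\in\ster B(x_0,r)$. Since Theorem~\ref{pointvals} also guarantees $f(x_0)\in\Fin(\ster\C)$, there is a standard constant $M>0$ with $|f(x)|\le M$ throughout $\ster B(x_0,r)$. I take $\omega:=B(x_0,r)$ as the desired neighborhood of $x_0$.

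For any standard $\phi\in\test(\omega)$, the representative property of $f$ yields
\[ \langle T,\phi\rangle \approx \int_{\ster\Omega} f(x)\phi(x)\,dx = \int_{\ster\omega} f(x)\phi(x)\,dx, \]
and the uniform bound on $f$, combined with the transfer identity $\int_{\ster\omega}|\phi(x)|\,dx = \int_\omega|\phi(x)|\,dx$ (valid because $\phi$ is standard and compactly supported in $\omega$), gives
\[ \Bigl|\int_{\ster\omega} f(x)\phi(x)\,dx\Bigr| \le M\int_{\omega}|\phi(x)|\,dx. \]
Since $\langle T,\phi\rangle$ is a standard complex number, the infinitesimal discrepancy disappears and we obtain the standard estimate $|\langle T,\phi\rangle|\le M\|\phi\|_{L^1}$ for every $\phi\in\test(\omega)$; as the $L^1$ norm is dominated by the sup norm on each $\test_K(\omega)$ with $K\subset\omega$ compact, this is a continuity estimate of order zero, so $\restr{T}{\omega}$ is a continuous map $\test(\omega)\to\C$, i.e., a Schwartz distribution on $\omega$. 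The one place requiring care is the transfer identity above, ensuring that the bound passes from $\ster\C$ to $\C$; it is routine, and beyond it I do not foresee any genuine obstacle.
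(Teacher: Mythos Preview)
Your argument is correct and follows essentially the same route as the paper: overspill from S-continuity to get a standard ball on which $f$ is bounded by a finite constant, then an order-zero estimate on $\langle T,\phi\rangle$ obtained by taking standard parts. The only cosmetic difference is that the paper bounds $\int f\phi$ directly by $\sup|f|\cdot\mu(B(x_0,r))\cdot\sup|\phi|$ rather than passing through the $L^1$ norm of $\phi$.
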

\begin{proof}
Let $f$ be a representative of $T$ that is S-continuous. As in the proof of theorem \ref{pointvals}, there exists $r\in\R$, $r>0$, such that $\abs{f(x)-f(x_0)}\le 1$ for each $x\in\ster B(x_0,r)\subseteq\ster\Omega$. Let $\phi\in\test(B(x_0,r))$. Then
\begin{multline*}
\abs{\left<T,\phi\right>}\approx \abs{\int_{\ster B(x_0,r)} f\phi}\le \sup_{B(x_0,r)}\abs{f}\mu(B(x_0,r))\sup_{B(x_0,r)}\abs{\phi}\\
\le \big(\abs{f(x_0)}+1\big)\mu(B(x_0,r))\sup_{B(x_0,r)}\abs{\phi},
\end{multline*}
where $\mu$ denotes the Lebesgue measure. Taking standard parts,
\[\abs{\left<T,\phi\right>}\le \big(\abs{\st f(x_0)}+1\big)\mu(B(x_0,r))\sup_{B(x_0,r)}\abs{\phi}.\]
Hence $\restr{T}{B(x_0,r)}$ is continuous.
\end{proof}

\begin{df}
We call $\psi\in\ster\test(\R^n)$ a strict nonstandard delta function (this name corresponds to the standard notion of a \emph{strict delta net}, see e.g.\ \cite[\S 7]{Ob92}) if
\begin{enumerate}
\item $\int_{\ster\R^n}\psi=1$
\item $\psi(x)=0$, $\forall x\not\approx 0$
\item $\int_{\ster\R^n}\abs{\psi}\in\Fin(\ster\R)$.
\end{enumerate}
\end{df}

\begin{lemma}\label{repres-delta}
Let $\psi$ be a strict nonstandard delta function. Let $\Omega$ be a (standard) neighborhood of $0$ and let $f\in\ster{\Cnt[0]}(\Omega)$ be S-continuous at $0$. Then $\int_{\ster\R^n} \psi f \approx f(0)$.
\end{lemma}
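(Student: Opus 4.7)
The plan is to follow closely the strategy used in the proof of Theorem~\ref{pointvals}. I would begin by fixing an arbitrary standard $\eps>0$ and applying overspill, exactly as in that proof, to the $S$-continuity of $f$ at $0$: this produces a standard $r>0$, with $\ster B(0,r)\subseteq\ster\Omega$, such that $\abs{f(x)-f(0)}\le\eps$ for every $x\in\ster B(0,r)$.

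Next I would invoke property~(2) of the strict nonstandard delta function. Since $r$ is standard positive and $\psi$ vanishes at every non-infinitesimal point, $\psi$ is identically zero on $\ster\R^n\setminus\ster B(0,r)$. Using property~(1) to write $f(0)=f(0)\int_{\ster\R^n}\psi$, I would rewrite
\[
\int_{\ster\R^n}\psi f - f(0) = \int_{\ster B(0,r)}\psi(x)\big(f(x)-f(0)\big)\,dx,
\]
so that the pointwise bound combined with property~(3) yields
\[
\abs{\int_{\ster\R^n}\psi f - f(0)} \le \eps\int_{\ster\R^n}\abs{\psi} = \eps C,
\]
where $C:=\int_{\ster\R^n}\abs{\psi}\in\Fin(\ster\R)$.

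Since $C$ is finite and $\eps$ was an arbitrary positive standard real, $\eps C$ can be made arbitrarily small, so $\int_{\ster\R^n}\psi f\approx f(0)$, as desired. The argument is essentially routine; the only conceptual point worth flagging is that property~(3) -- the finite $L^1$-norm of $\psi$ -- is precisely what keeps the error $\eps C$ bounded. Without it one could not absorb the $L^1$-mass of $\psi$ into a single finite constant, and the conclusion would genuinely fail; this is the only place where I expect any subtlety, rather than a deeper obstacle in the argument.
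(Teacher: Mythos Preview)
Your argument is correct. The only difference from the paper's proof is where the overspill is applied: the paper applies overspill to the support of $\psi$, obtaining an \emph{infinitesimal} $r>0$ with $\psi(x)=0$ for $\abs{x}\ge r$, and then uses $S$-continuity of $f$ directly to get $\sup_{\abs{x}\le r}\abs{f(x)-f(0)}\approx 0$, giving the conclusion in one line. You instead apply overspill to the $S$-continuity of $f$ (as in Theorem~\ref{pointvals}) to obtain a \emph{standard} $r>0$ with an $\eps$-bound, and then let $\eps$ range over the positive standard reals. Both routes are valid; the paper's is slightly shorter because it avoids the final ``for every standard $\eps$'' step, while yours has the minor advantage of reusing verbatim the overspill argument already established in Theorem~\ref{pointvals}.
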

\begin{proof}
As $\psi$ is internal, there exists $r\in\ster\R$, $r\approx 0$ such that $\psi(x)=0$ if $\abs x \ge r$ by overspill. Then 
\[\abs{f(0) - \int_{\ster\R^n}\psi f} = \abs{\int_{\ster\R^n} (f(0) - f(x))\psi(x)\,dx}\le \sup_{x\le r}\abs{f(0)-f(x)}\int_{\ster\R^n}\abs\psi\approx 0.\]
\end{proof}
In particular, each strict nonstandard delta function is a representative of the delta distribution, since $\ster \phi$ is S-continuous for each $\phi\in\test(\R^n)$ (see e.g.\ \cite[\S7.1]{Gol}).

\begin{thm}\label{straight}
Let $T$ be a predistribution that admits the value $c$ (in the sense of Robinson) at $x_0\in\Omega$. Then $\left<\ster T(x),\psi(x-x_0)\right>\approx c$ for each strict delta function $\psi$.
\end{thm}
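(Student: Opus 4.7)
The plan is to extract from the proof of Theorem~\ref{pointvals} a purely standard inequality comparing $\left<T,\phi\right>$ with $c$, then transfer it to internal test functions, and finally instantiate at $\phi(x)=\psi(x-x_0)$.

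Fix an S-continuous representative $f$ of $T$ at $x_0$ with $\st f(x_0)=c$. The computation in the proof of Theorem~\ref{pointvals} shows that for every standard $\eps>0$ there is a standard $r>0$ such that
\[
\abs{\left<T,\phi\right>-f(x_0)}\lessapprox \eps\int\abs\phi
\]
for every standard $\phi\in\test(B(x_0,r))$ with $\int\phi=1$. Because $\left<T,\phi\right>$ and $c$ are standard complex numbers and $f(x_0)\approx c$, the ``$\lessapprox$'' can be sharpened to ``$\le$'' after replacing $f(x_0)$ by $c$. This yields the purely standard statement
\[
(\star)\quad \forall\eps>0\ \exists r>0\ \forall\phi\in\test(B(x_0,r))\colon\ \textstyle\int\phi=1 \Rightarrow \abs{\left<T,\phi\right>-c}\le\eps\int\abs\phi.
\]

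Applying transfer to $(\star)$ replaces $T,\test,B$ with $\ster T,\ster\test,\ster B$ and allows $\phi$ to range over internal test functions. For an arbitrary standard $\eps>0$, fix the corresponding standard $r>0$. The same overspill argument as in Lemma~\ref{repres-delta} produces $\rho\approx 0$ with $\psi(x)=0$ whenever $\abs{x}\ge\rho$; hence $\phi(x):=\psi(x-x_0)$ lies in $\ster\test(\ster B(x_0,r))$ and $\int\phi=\int\psi=1$. The transferred $(\star)$ then gives
\[
\abs{\left<\ster T(x),\psi(x-x_0)\right>-c}\le\eps\int\abs\psi.
\]
Since $\int\abs\psi\in\Fin(\ster\R)$ is dominated by some standard $M$, the right-hand side is at most $M\eps$. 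As $\eps>0$ ranges over the standard positive reals, we conclude that $\left<\ster T(x),\psi(x-x_0)\right>-c$ is infinitesimal, i.e.\ $\left<\ster T(x),\psi(x-x_0)\right>\approx c$.

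The only delicate step is the passage from the nonstandard ``$\lessapprox$''-inequality of Theorem~\ref{pointvals} to the standard ``$\le$''-inequality $(\star)$, which is precisely what makes transfer applicable here; it rests on the elementary observation that an infinitesimal gap between two standard numbers must vanish. Once $(\star)$ is in place, the remainder of the argument is a direct substitution combined with the overspill property of $\psi$.
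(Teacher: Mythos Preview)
Your proof is correct and follows essentially the same route as the paper: extract from Theorem~\ref{pointvals} a standard inequality of the form $\abs{\left<T,\phi\right>-c}\le (\text{const})\,\eps$ valid for all $\phi\in\test(B(x_0,r))$ with $\int\phi=1$, transfer it to internal $\phi$, and then substitute $\phi(x)=\psi(x-x_0)$. The only cosmetic differences are that the paper writes the bound as $(C+1)\eps$ with $C=\int\abs\phi$ while you write $\eps\int\abs\phi$, and the paper labels the passage to internal $\phi$ ``overspill'' where you (more accurately) call it transfer.
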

\begin{proof}
Let $\eps\in\R$, $\eps>0$. In the proof of
theorem~\ref{pointvals}, we showed that there exists $r\in\R$, $r>0$ such that for each $\phi\in\test(B(x_0,r))$ with $\int_\Omega\phi=1$ and $\int_\Omega\abs{\phi}= C$, we have $\abs{\left<T,\phi\right>-c}\le (C+1)\eps$.
By overspill, for each $\phi\in\ster\test(B(x_0,r))$ with $\int_{\ster\Omega}\phi=1$ and $\int_{\ster\Omega}\abs{\phi}= C$, we have $\abs{\left<\ster T,\phi\right>-c}\le (C+1)\eps$. This holds in particular for any $\eps\in\R$, $\eps>0$ and $\phi(x)= \psi(x-x_0)$ with $\psi$ a strict nonstandard delta function. Hence $\left<\ster T(x),\psi(x-x_0)\right>\approx c$ for such $\psi$.
\end{proof}
From our main result, theorem \ref{main}, it will follow that the property in the statement of the previous theorem actually characterizes predistributions admitting a value at $x_0$ in the sense of Robinson.

The previous theorem sheds a light on the relation between point values in the sense of Robinson and point values in the sense of \L ojasiewicz. By a nonstandard characterization of limits (see e.g.\ \cite[\S7.3]{Gol}) a distribution $T\in\test'(\Omega)$ admits the value $c$ at $x_0\in\Omega$ in the sense of \L ojasiewicz iff $\left<\ster T(x),\psi(x-x_0)\right>\approx c$ for each so-called model delta function $\psi$, i.e., $\psi(x) = \rho^{-n}\phi(x/\rho)$ with $\phi\in\test(\R^n)$, $\int_{\R^n}\phi = 1$, $\rho\in\ster\R\setminus\{0\}$, $\rho\approx 0$ (the name model delta function corresponds to the standard notion of a \emph{model delta net}, see e.g.\ \cite[\S 7]{Ob92}).

\begin{lemma}\label{local-point}
Let $\omega$ be an open subset of $\Omega$ and let $x_0\in\omega$. Then a linear map $T$: $\test(\Omega)\to\C$ admits the value $c$ at $x_0$ (in the sense of Robinson) iff $\restr{T}{\omega}$: $\test(\omega)\to\C$ admits the value $c$ at $x_0$ (in the sense of Robinson).
\end{lemma}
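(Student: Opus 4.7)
The forward direction is essentially trivial: if $f \in \ster{\Cnt[\infty]}(\Omega)$ is an $S$-continuous representative of $T$ at $x_0$, then $\restr{f}{\ster\omega}$ represents $\restr{T}{\omega}$ (every $\phi \in \test(\omega)$ is also an element of $\test(\Omega)$ via extension by zero, with unchanged integral) and inherits $S$-continuity at $x_0$ with unchanged standard part $c = \st f(x_0)$.

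For the converse, the plan is to glue a representative of $\restr{T}{\omega}$ near $x_0$ with a representative of $T$ away from $x_0$, using a smooth cutoff. Let $g \in \ster{\Cnt[\infty]}(\omega)$ represent $\restr{T}{\omega}$, be $S$-continuous at $x_0$, and satisfy $\st g(x_0) = c$. I pick concentric open balls $B_1 \Subset B_2 \Subset \omega$ centered at $x_0$ and a cutoff $\chi \in \test(\omega)$ with $\chi \equiv 1$ on an open neighborhood of $\overline{B_1}$ and $\supp\chi \subset B_2$. Setting $\omega_1 := \Omega \setminus \overline{B_1}$, one has $\Omega = \omega \cup \omega_1$ and $\supp(1-\chi) \subset \omega_1$. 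The key auxiliary step is to produce an internal $h \in \ster{\Cnt[\infty]}(\omega_1)$ representing the linear functional $\restr{T}{\omega_1}\colon \test(\omega_1) \to \C$; this follows from a standard saturation argument, since any finite system of integral conditions against finitely many smooth test functions on $\omega_1$ can be realized by an element of $\Cnt[\infty](\omega_1)$.

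Now define $f := \chi g + (1-\chi)h$, each term extended smoothly by zero to $\ster\Omega$. For any $\phi \in \test(\Omega)$, decomposing $\phi = \chi\phi + (1-\chi)\phi$ with $\chi\phi \in \test(\omega)$ and $(1-\chi)\phi \in \test(\omega_1)$ gives
\[\int_{\ster\Omega} f\phi = \int_{\ster\omega} g(\chi\phi) + \int_{\ster\omega_1} h((1-\chi)\phi) \approx \langle T, \chi\phi\rangle + \langle T, (1-\chi)\phi\rangle = \langle T, \phi\rangle,\]
so $f$ represents $T$; and since $\chi \equiv 1$ on $\ster B_1$, we have $f \equiv g$ there, so $f$ is $S$-continuous at $x_0$ with $\st f(x_0) = c$. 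The main obstacle is the saturation step producing $h$: without some representative of $T$ away from $x_0$, the gluing cannot even be set up. Once $h$ is granted, the cutoff gluing proceeds routinely, and smoothness of the extensions is ensured by taking $\chi \equiv 1$ on an open neighborhood of $\overline{B_1}$ (so that $1-\chi$ vanishes identically in a neighborhood of the boundary of $\omega_1$).
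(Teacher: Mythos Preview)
Your proof is correct and follows essentially the same cutoff-gluing strategy as the paper. The only difference is in how the ``far'' piece is obtained: rather than invoking saturation to produce a representative $h$ of $\restr{T}{\omega_1}$ on the complement $\omega_1=\Omega\setminus\overline{B_1}$, the paper simply takes any global representative $g\in\ster{\Cnt[\infty]}(\Omega)$ of $T$ (whose existence is the same saturation/concurrence fact applied once on $\Omega$), observes that $(f-g)\chi$ represents $0$ on $\Omega$, and concludes that $f\chi+g(1-\chi)$ represents $T$ and agrees with $f$ near $x_0$. This avoids introducing $\omega_1$ and the separate extension-by-zero of $(1-\chi)h$, but the underlying idea is identical.
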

\begin{proof}
$\Rightarrow$: immediate.\\
$\Leftarrow$: Let $f\in\ster{\Cnt[\infty]}(\omega)$ be a representative of $\restr{T}{\omega}$, i.e., $\int_{\ster\omega}f \phi\approx \left<T,\phi\right>$, $\forall \phi\in\test(\omega)$, and suppose that $f$ is S-continuous in $x_0$. Let $g\in\ster{\Cnt[\infty]}(\Omega)$ be any representative of $T$. Then $f-\restr{g}{\omega}$ is a representative of the $0$-distribution on $\omega$. Let $\chi\in\test(\omega)$ with $\chi=1$ on some (standard) neighborhood $V$ of $x_0$. Then $\int_{\ster\Omega}(f-g)\chi\phi\approx 0$, $\forall \phi\in\test(\Omega)$, since $\chi\phi\in\test(\omega)$. So $(f-g)\chi$ is a representative of $0$ on $\Omega$, and $f\chi+g(1-\chi)$ is a representative of $T$ on $\Omega$ which is equal to $f$ in a (standard) neighborhood of $x_0$.
\end{proof}

\begin{thm}\label{main}
Let $T$ be a predistribution and $x_0\in\Omega$. Then $T$ is standard at $x_0$ iff $T$ is a continuous function in a neighborhood of $x_0$.
\end{thm}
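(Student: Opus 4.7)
The plan is to prove the two implications separately: ($\Leftarrow$) by constructing an S-continuous representative via nonstandard mollification, and ($\Rightarrow$) by invoking \L ojasiewicz's Theorem~\ref{Lojasiewicz} after verifying its hypothesis through Theorem~\ref{straight} and a change of variables.

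For the direction ($\Leftarrow$), suppose $T$ coincides with a continuous function on an open neighborhood $\omega$ of $x_0$. By Lemma~\ref{local-point}, it suffices to produce an S-continuous $\ster{\Cnt[\infty]}$-representative of $\restr{T}{\omega}$. The natural candidate is the nonstandard mollification $f:=\ster T\ster\star\rho_\eta$, where $\rho\in\test(\R^n)$ is a standard mollifier with $\int\rho=1$, $\rho_\eta(x)=\eta^{-n}\rho(x/\eta)$, and $\eta>0$ is infinitesimal; after shrinking $\omega$ slightly to avoid boundary issues, $f$ lies in $\ster{\Cnt[\infty]}$. Standard mollification estimates, transferred to the nonstandard setting, give $\int f\phi\approx\langle T,\phi\rangle$ for each $\phi\in\test(\omega)$, so $f$ represents $T$; and since $T$ is continuous at every standard point of $\omega$, $f$ is S-continuous at $x_0$. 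This direction is essentially routine.

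The substantive direction is ($\Rightarrow$). Assume $T$ admits value $c$ at $x_0$ in Robinson's sense. By Lemma~\ref{distr}, $T$ is already a Schwartz distribution on some neighborhood of $x_0$, so it makes sense to aim for Theorem~\ref{Lojasiewicz}: I plan to verify that for each $\phi\in\test(\R^n)$ the double limit $\lim_{\eps\to 0,\,\lambda\to 0}\langle T(x_0+\eps x+\lambda),\phi(x)\rangle$ exists. By the nonstandard characterization of limits, this amounts to showing that $\langle \ster T(x_0+\eps x+\lambda),\phi(x)\rangle\approx c\int\phi$ for every positive infinitesimal $\eps$ and every $\lambda\approx 0$. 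The change of variable $y=x_0+\eps x+\lambda$ recasts the pairing as $\langle \ster T(y),\psi(y-x_0)\rangle$ with $\psi(y)=\eps^{-n}\phi((y-\lambda)/\eps)$, and a direct computation shows that $\psi$ is supported in an infinitesimal neighborhood of $0$, has $\int\abs\psi=\int\abs\phi\in\R$, and $\int\psi=\int\phi$.

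When $\int\phi=1$, $\psi$ is a genuine strict nonstandard delta function and Theorem~\ref{straight} delivers $\langle\ster T(y),\psi(y-x_0)\rangle\approx c$ at once; when $\int\phi=K\ne 0$, this applies to $\phi/K$ and linearity gives the desired $cK$. The main obstacle is the balanced case $\int\phi=0$, where $\psi$ fails to be a delta function. I would handle it by fixing a reference $\phi_0\in\test(\R^n)$ with $\int\phi_0=1$ and decomposing $\phi=(\phi+\phi_0)-\phi_0$: both $\phi+\phi_0$ and $\phi_0$ integrate to one, so the previous step applies to each, and subtracting yields $\langle T(x_0+\eps x+\lambda),\phi\rangle\approx c-c=0=c\int\phi$. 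With the hypothesis of Theorem~\ref{Lojasiewicz} thus verified, continuity of $T$ in a neighborhood of $x_0$ follows.
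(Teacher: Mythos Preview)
Your proposal is correct and follows essentially the same route as the paper: mollification by a strict nonstandard delta function together with Lemma~\ref{local-point} for ($\Leftarrow$), and for ($\Rightarrow$) the change of variables producing $\psi(y)=\eps^{-n}\phi((y-\lambda)/\eps)$, then Theorem~\ref{straight}, then the nonstandard limit characterization to feed Theorem~\ref{Lojasiewicz}. The only notable difference is that the paper checks the two-parameter limit just for test functions with $\int\phi=1$ and leaves the extension to arbitrary $\phi$ implicit, whereas you spell out the balanced case $\int\phi=0$ via the decomposition $\phi=(\phi+\phi_0)-\phi_0$; otherwise the arguments coincide.
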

\begin{proof}
$\Rightarrow$: by lemma \ref{distr}, there exists an open neighborhood $\omega$ of $x_0$ such that $T\in\test'(\omega)$. Let $c$ denote the value of $T$ at $x_0$ (in the sense of Robinson). Let $\phi\in\test(\omega)$ with $\int_{\omega}\phi=1$. Let $\eps\in\ster\R$, $\eps\approx 0$ and $\lambda\in\ster\R^n$, $\lambda\approx 0$. Then, by theorem \ref{straight},
\[
\big< \ster T(x_0+\eps x + \lambda),\phi(x)\big> = \Big< \ster T(x), \eps^{-n}\phi\Big(\frac{x-x_0-\lambda}\eps\Big)\Big>\approx c,
\]
since $\psi(x):=\eps^{-n}\phi(\frac{x-\lambda}\eps)\in\ster\test(\R^n)$ is a strict nonstandard delta function:\\
1. by a change of variables, $\int_{\ster\R^n}\psi = \int_{\ster\R^n}\phi=1$.\\
2. $\psi(x)\ne 0$ iff $\frac{x-\lambda}\eps\in \supp\phi$ iff $x\in\lambda+ \eps\supp\phi$; then in particular $x\approx 0$.\\
3. $\int_{\ster\R^n} \abs\psi = \int_{\ster\R^n} \abs\phi\in\R$ is finite.\\
As $\eps$ and $\lambda$ are arbitrary, by a nonstandard characterization of limits (see e.g.\ \cite[\S7.3]{Gol})
\[\lim_{\lambda\to 0, \eps\to 0} \big<T(x_0+\eps x+\lambda), \phi(x)\big> = c\]
for each $\phi\in\test(\omega)$ with $\int_{\omega}\phi=1$. By theorem \ref{Lojasiewicz}, $T$ is a continuous function in a neighborhood of $x_0$.

$\Leftarrow$: let $\omega$ be an open neighborhood of $x_0$ and $f\in\Cnt[0](\omega)$ such that $T=f$ on $\omega$. Let $g:= f\conv\psi\in \ster{\Cnt[\infty]}(\omega')$ for some neighborhood $\omega'$ of $x_0$ ($\conv$ denoting convolution), where $\psi$ is a strict nonstandard delta function. By lemma \ref{repres-delta}, $g(x) = \int_{\ster\R^n}f(x-y)\psi(y)\,dy\approx f(x)$ for each $x\in\ster\omega'$ (lemma \ref{repres-delta} can be applied since $f$ is S-continuous at any $x\in \ster\omega'$ if the closure of $\omega'$ is contained in $\omega$, see e.g.\ \cite[\S7.1]{Gol}). In particular, $g$ is S-continuous at $x_0$. Then for any $\phi\in\test(\omega')$,
\[\abs{\int_{\ster\omega'} g\phi - \left<T,\phi\right>} = \abs{\int_{\ster\omega'} g\phi - \int_{\ster\omega'} f\phi}\le\sup_{\ster\omega'}\abs{g-f} \int_{\omega'}\abs\phi\approx 0,\]
so $g$ represents $T$ on $\omega'$. By lemma \ref{local-point}, $T$ admits the value $\st g(x_0)$ at $x_0$.
\end{proof}


\begin{thebibliography}{10}
\bibitem{EstKan} R.\ Estrada, R.\ Kanwal, A Distributional Approach to Asymptotics, 2nd edition. Birkh\"auser, Boston, 2002.

\bibitem{est-vind} R. Estrada, J. Vindas, Determination of jumps of distributions by differentiated means, Acta Math. Hungar. 124 (2009) 215--241.

\bibitem{Gol} R.\ Goldblatt, Lectures on the Hyperreals. An introduction to Nonstandard Analysis. Graduate Texts in Math.\ vol.\ 188. Springer, New York, 1998.
\bibitem{Loeb} P.\ Loeb, A note on continuity for Robinson's predistributions. In \emph{Victoria Symposium on Nonstandard Analysis}, Lecture Notes in Math.\ 369, 153--154. Springer Verlag, Berlin, 1974.

\bibitem{Loj} S.\ \L ojasiewicz,
Sur la valeur et la limite d'une distribution en un point,
Studia Math. 16 (1957) 1--36.

\bibitem {Loj2}S. \L ojasiewicz, Sur la fixation des variables dans une
distribution, Studia Math. 17 (1958) 1--64. 

\bibitem{Ob92} M.\ Oberguggenberger, Multiplication of Distributions and Applications to Partial Differential Equations. Pitman Res. Not. Math.\ 259. Longman Sci. Techn., Essex, 1992.

\bibitem{Pilipovic-Teofanov} S. Pilipovi\'{c}, N. Teofanov, Multiresolution expansion, approximation order and quasiasymptotic behavior of tempered distributions, J. Math. Anal.
Appl. 331 (2007) 455--471.

\bibitem{Rob} A.\ Robinson, Non-Standard Analysis. North-Holland, Amsterdam, 1966.

\bibitem{Schw} L.\ Schwartz, Th\'eorie des distributions. Hermann, Paris, 1966.

\bibitem{Walter1} G. G. Walter, Pointwise convergence of distribution expansions, Studia Math. 26 (1966) 143--154.

\bibitem{Walter2} G. G. Walter, Sampling bandlimited functions of polynomial growth, SIAM J. Math. Anal. 19 (1988) 1198--1203.

\bibitem{Walter3} G. G. Walter, Pointwise convergence of wavelet expansions, J. Approx. Theory 80 (1995) 108--118.

\bibitem{vind-est} J. Vindas, R. Estrada, Distributional point values and convergence of Fourier series and integrals, J. Fourier Anal. Appl. 13 (2007) 551--576. 

\end{thebibliography}
\end{document}